\newtheorem{theorem}{Theorem}[section]
\newtheorem{proposition}[theorem]{Proposition}
\newtheorem{lemma}[theorem]{Lemma}
\newtheorem{corollary}[theorem]{Corollary}
\newtheorem{definition}[theorem]{Definition}
\newtheorem{example}[theorem]{Example}
\newtheorem{remark}[theorem]{Remark}
\newtheorem{problemx}{Problem}
\begin{document}



\title{Nonnegative measures belonging to $H^{-1}(\mathbb{R}^2).$ \\
\small{}}
\author{{Grzegorz Jamr\'oz} \\ \\
{ \small \it Institute of Mathematics, Polish Academy of Sciences, Śniadeckich 8, 00-656 Warszawa} \\
{\it \small e-mail: jamroz@impan.pl}}
\maketitle


\begin{abstract}
Radon measures belonging to the negative Sobolev space $H^{-1}(\mathbb{R}^2)$ are important from the point of view of fluid mechanics as they model vorticity of vortex-sheet solutions of incompressible Euler equations. In this note we discuss regularity conditions sufficient for nonnegative Radon measures supported on a line to be in $H^{-1}(\mathbb{R}^2)$. 
Applying the obtained results, we derive consequences for measures on $\mathbb{R}^2$ with arbitrary support and 
prove elementarily, among other things, that measures belonging to $H^{-1}(\mathbb{R}^2)$ may be supported on a set of Hausdorff dimension $0$. We comment on possible numerical applications.
\end{abstract}

{\bf Keywords:} embeddings of measures, vorticity, Hausdorff dimension

{\bf MSC 2010:} 46E27, 46E30, 46E35, 28A78 

\section{Introduction}
Let $\mathcal{M}_+(\mathbb{R}^2)$ denote the space of nonnegative bounded Radon measures on $\mathbb{R}^2$ (see \cite{EvaGar})  and let $H^{-1}(\mathbb{R}^2)$ be the space of all tempered distributions $f$ on $\mathbb{R}^2$ such that $$\int_{\mathbb{R}^2} (1+|y|^2)^{-1} |\hat{f}(y)|^2 dy < \infty.$$  
Alternatively, $H^{-1}(\mathbb{R}^2)$ can be viewed as the space of all continuous functionals on the Sobolev space $W^{1,2}(\mathbb{R}^2)$ (see e.g. \cite{AdaFou}). The following basic problem can be posed:

\begin{problemx}
\label{Prob1}
Characterize the space $\mathcal{M}_+(\mathbb{R}^2)  \cap H^{-1}(\mathbb{R}^2)$.
\end{problemx}

Our motivation to study this problem originates in fluid mechanics. Namely, let $u : \mathbb{R}^2 \to \mathbb{R}^2$ be the velocity field of a fluid in two-dimensional space and let $$\omega = {\rm curl} (u) := \partial_{x_1}u_2 - \partial_{x_2}u_1$$
be its vorticity field. Then $\omega \in \mathcal{M}_+(\mathbb{R}^2)  \cap H^{-1}(\mathbb{R}^2)$ for compactly supported $\omega$ means that 
\begin{itemize}
\item vorticity of the flow is everywhere nonnegative (condition $\omega \in \mathcal{M}_+(\mathbb{R}^2)$),
\item kinetic energy of the fluid is locally finite, i.e. $\int_{\Omega} u^2(x)dx< \infty$ for every bounded $\Omega \subset \mathbb{R}^2$ (condition $\omega \in H^{-1}(\mathbb{R}^2)$).
\end{itemize}

The latter condition follows from the fact that the Biot-Savart operator mapping $\omega$ to $u$ by the convolution formula
$$u = K*\omega$$ for $K(x) = \frac {x^{\perp}}{2\pi |x|^2}$ is bounded from $H^{-1}$ to $L^2_{loc}$, see below.
\\ 

 Solutions of the incompressible Euler equations,
\begin{eqnarray*}
\partial_t u + u \nabla u + \nabla p &=& 0, \\
{\rm div} (u)&=&0.
\end{eqnarray*}
with vorticity belonging to $\mathcal{M}_+(\mathbb{R}^2)$ were defined and studied in \cite{DiPMaj}. In \cite{Del} Delort proved a basic existence theorem, which states that 
for initial data $u(t=0,x)$ such that $\omega(0,x) := curl(u(0,x))$ is a bounded nonnegative Radon measure belonging to  $H^{-1}(\mathbb{R}^2)$ there exists a global solution $u(t,x)$ of the Euler equations such that $\omega(t,x) := curl(u(t,x))$ is a bounded nonnegative Radon measure belonging to $H^{-1}(\mathbb{R}^2)$ for every $t>0$. Uniqueness of such solutions is still an outstanding open problem. To approach it, it seems reasonable to study Problem \ref{Prob1}, see also the introduction in \cite{CieSzu} for a more comprehensive physical background and motivations. 

In the case of compactly supported measures Problem \ref{Prob1} can be solved as follows.
Define the positive logarithmic energy of a measure $\omega \in \mathcal{M}_+(\mathbb{R}^2)$ by
\begin{equation}
\mathcal{H}^+(\omega) := \int_{\mathbb{R}^2} \int_{\mathbb{R}^2} \log^+ \frac {1}{|x-y|} \omega(dx)\omega(dy),
\label{H+}
\end{equation}
where $\log^+(x) = \max(\log(x),0)$. In \cite{S}, which builds upon previous ideas of Delort \cite{Del} the following crucial characterization was demonstrated. 
\begin{lemma}[Lemma 3.1 in \cite{S}]
\label{LemSchochet}
Let $\omega$ be a nonnegative measure of finite mass and compact support, and let $u=K*\omega$ be the velocity corresponding to the vorticity $\omega$. Then the following are equivalent:
\begin{enumerate}
\item $\omega$ is in $H^{-1}$.
\item $u$ is in $L^2_{loc}$.
\item $\mathcal{H}^+(\omega)<\infty$.
\end{enumerate}
\end{lemma}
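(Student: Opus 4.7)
The plan is to establish the chain $(1)\Leftrightarrow(2)\Leftrightarrow(3)$, using Fourier analysis for the first equivalence and the stream function/Green's function representation of the 2D Laplacian for the second.

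For $(1)\Leftrightarrow(2)$ I would work on the Fourier side. The Biot--Savart kernel satisfies $\widehat{K}(\xi)=c\,\xi^{\perp}/|\xi|^{2}$, so $\hat u(\xi)=\widehat{K}(\xi)\hat{\omega}(\xi)$ and formally
\[
\|u\|_{L^{2}}^{2}=c'\int_{\mathbb{R}^{2}}|\xi|^{-2}|\hat\omega(\xi)|^{2}\,d\xi.
\]
The right-hand side differs from the $H^{-1}$-norm $\int(1+|\xi|^{2})^{-1}|\hat\omega|^{2}\,d\xi$ only by a low-frequency correction. Since $\omega$ has compact support, $\hat\omega$ is smooth and bounded near $\xi=0$, so the low-frequency part of the $H^{-1}$-norm is automatically finite; conversely the high-frequency part agrees (up to constants) with $\int|u|^{2}$ modulo localization. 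The only careful point is that $u$ need not be globally $L^{2}$ when $\int\omega\ne 0$ (it decays like $|x|^{-1}$), which is exactly why the statement uses $L^{2}_{\mathrm{loc}}$; a cutoff argument, together with the fact that $u$ is smooth outside $\operatorname{supp}\omega$, reduces the global Plancherel identity to the required local one.

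For $(2)\Leftrightarrow(3)$ I would introduce the stream function $\psi:=-\frac{1}{2\pi}\log|\cdot|\,*\,\omega$, which is well defined as a locally integrable function because $\omega$ is a finite measure of compact support. Then $u=\nabla^{\perp}\psi$ and $-\Delta\psi=\omega$. Mollifying $\omega$ by a standard approximation $\omega_{\varepsilon}=\omega*\rho_{\varepsilon}$ and integrating by parts one obtains
\[
\int_{\mathbb{R}^{2}}|u_{\varepsilon}|^{2}\,dx=\int\psi_{\varepsilon}\,\omega_{\varepsilon}\,dx=-\frac{1}{2\pi}\iint\log|x-y|\,\omega_{\varepsilon}(dx)\omega_{\varepsilon}(dy).
\]
Writing $-\log|x-y|=\log^{+}\!\frac{1}{|x-y|}-\log^{+}|x-y|$ and using compact support plus finite mass, the second term is bounded by $C\|\omega\|^{2}$. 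The crucial role of the hypothesis $\omega\ge 0$ is now that both $|u_{\varepsilon}|^{2}$ and $\log^{+}\!\frac{1}{|x-y|}\,\omega_{\varepsilon}\otimes\omega_{\varepsilon}$ are nonnegative, so monotone convergence identifies the limits and yields the identity $\int|u|^{2}=-\frac{1}{2\pi}\iint\log|x-y|\,\omega(dx)\omega(dy)$ modulo a bounded correction; this makes (2) and (3) equivalent.

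The principal obstacle is the rigorous passage to the limit in the integration-by-parts identity when $\omega$ is merely a Radon measure. Without the sign assumption one would need to control signed cancellations in the logarithmic double integral, which is delicate because $\log$ is unbounded both near $0$ and at infinity; the nonnegativity of $\omega$ turns this into a monotone convergence problem, which is the reason the cited equivalence is restricted to $\mathcal{M}_{+}$. A secondary technical point is ensuring that the boundary term in the integration by parts vanishes in the limit, for which one uses the $|x|^{-1}$ decay of $u$ and the $\log|x|$ growth of $\psi$ inherited from the compact support of $\omega$.
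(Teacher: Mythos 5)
The paper itself gives no proof of this lemma: it is quoted verbatim from Schochet \cite{S} (Lemma 3.1 there), so there is no internal argument to compare against. Your overall strategy --- Plancherel plus a localization for $(1)\Leftrightarrow(2)$, and the stream-function/logarithmic-potential identity plus positivity for $(2)\Leftrightarrow(3)$ --- is the standard route to this result.

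There is, however, one step that fails as written: the global identity
\[
\int_{\mathbb{R}^{2}}|u_{\varepsilon}|^{2}\,dx=-\frac{1}{2\pi}\iint\log|x-y|\,\omega_{\varepsilon}(x)\,\omega_{\varepsilon}(y)\,dx\,dy
\]
is false, and the boundary term in the integration by parts does \emph{not} vanish. Since $\omega\ge 0$, the only nontrivial case is $m=\int\omega>0$, and then $u(x)\sim\frac{m}{2\pi}\frac{x^{\perp}}{|x|^{2}}$, $\psi(x)\sim-\frac{m}{2\pi}\log|x|$ at infinity; hence $\int_{\mathbb{R}^{2}}|u|^{2}=+\infty$ while the right-hand side can be finite (e.g.\ for a smooth bump), and by your own decay rates the flux term satisfies $\bigl|\oint_{\partial B_{R}}\psi\,\partial_{n}\psi\,ds\bigr|\sim\frac{m^{2}}{2\pi}\log R\to\infty$: the $|x|^{-1}$ decay is exactly borderline in two dimensions. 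This is not a side issue --- it is the very reason the lemma is stated with $L^{2}_{loc}$ rather than $L^{2}$. The repair is to run Green's identity on a fixed ball $B_{R}\supset\operatorname{supp}\omega$ (or against a cutoff $\phi$), keeping the finite boundary term:
\[
\int_{B_{R}}|u_{\varepsilon}|^{2}\,dx=\int\psi_{\varepsilon}\,\omega_{\varepsilon}\,dx+\oint_{\partial B_{R}}\psi_{\varepsilon}\,\partial_{n}\psi_{\varepsilon}\,ds ,
\]
where the boundary integral is bounded uniformly in $\varepsilon$ because the mollified supports stay away from $\partial B_{R}$. Combined with your (correct) observation that $-\log|x-y|$ and $\log^{+}\frac{1}{|x-y|}$ differ by a function bounded on compact sets, this gives $(2)\Leftrightarrow(3)$; but in the limit $\varepsilon\to 0$ you still owe an argument that the mollified $\log^{+}$-energy converges to the unmollified one (lower semicontinuity of $\log^{+}\frac{1}{|x-y|}$ gives one inequality, superharmonicity of $-\log|\cdot|$ near the origin the other --- this is where nonnegativity enters a second time). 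The Fourier half is essentially fine, though for $(2)\Rightarrow(1)$ you should make the cutoff explicit: with $\chi\equiv 1$ near $\operatorname{supp}\omega$, the identity $\omega=\operatorname{curl}(\chi u)-\nabla\chi\wedge u$ exhibits $\omega$ as a derivative of an $L^{2}$ field plus an $L^{2}$ function, hence an element of $H^{-1}$.
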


As a simple corollary, we obtain that measures belonging to $H^{-1}$ have no discrete part. Indeed, $\mathcal{H} ^+ (\delta _x) = + \infty$ for every $x \in \mathbb{R}^2$, where $\delta_x$ is the Dirac mass in $x$. For general measures, however, Formula \eqref{H+}  is not very convenient to use and we would like to have more 'tangible' local conditions characterizing measures belonging to $H^{-1}$.
\\ \, 

The study of Problem A in relation to spirals of vorticity was initiated in \cite{CieSzu}, where the authors proved that the so-called Prandtl and Kaden spirals belong locally to $H^{-1}(\mathbb{R}^2)$. The crucial tool in \cite{CieSzu} was the following theorem.

\begin{theorem}[Theorem 1.1 from \cite{CieSzu}]
\label{Th_CieSzu}
Let $\mu$ be a positive Radon measure supported in a ball $B(0,R_0) \subset \mathbb{R}^2$. Assume that there exists a positive constant $c_1$ such that for any $r \le R_0$ 
$$\mu(B(0,r)) = c_1r^{\alpha}, \mbox{where } \alpha>0.$$
Then $\mu \in H^{-1}(\mathbb{R}^2)$.
\end{theorem}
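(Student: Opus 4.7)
My plan is to invoke Lemma~\ref{LemSchochet} to reduce the statement to verifying $\mathcal{H}^+(\mu)<\infty$, and then to exploit the radial growth hypothesis via a layer-cake representation combined with an annular upper bound on $\mu(B(x,t))$.

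Using the identity $\log^+(1/s)=\int_0^1 t^{-1}\mathbf{1}_{\{s<t\}}\,dt$ together with Fubini, I would first rewrite
\[
\mathcal{H}^+(\mu) \;=\; \int_0^1 \frac{I(t)}{t}\,dt,\qquad I(t):=\int_{\mathbb{R}^2}\mu(B(x,t))\,d\mu(x),
\]
so the problem becomes bounding $I(t)\le Ct^\beta$ for some $\beta>0$.

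The key geometric observation is the inclusion $B(x,t)\subset\{y:\bigl||y|-|x|\bigr|<t\}$, which together with the hypothesis gives
\[
\mu(B(x,t))\;\le\; c_1\bigl((|x|+t)^\alpha-(|x|-t)_+^\alpha\bigr).
\]
Changing variables through the radial pushforward $\pi_*\mu$, whose density on $[0,R_0]$ is $c_1\alpha a^{\alpha-1}$ by hypothesis, then reduces $I(t)$ to a one-dimensional integral
\[
I(t)\;\le\; c_1^2\alpha\int_0^{R_0}\bigl((a+t)^\alpha-(a-t)_+^\alpha\bigr)a^{\alpha-1}\,da.
\]

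The remaining work splits into two cases. For $0<\alpha\le 1$, concavity of $a\mapsto a^\alpha$ yields $(a+t)^\alpha-(a-t)_+^\alpha\le(2t)^\alpha$ uniformly in $a$, producing $I(t)\le Ct^\alpha$. For $\alpha>1$, the mean value theorem gives a bound of order $t$ on $\{a\ge t\}$ and of order $t^{2\alpha}$ on $\{a<t\}$, producing $I(t)\le C(t+t^{2\alpha})$. In either case $\int_0^1 I(t)/t\,dt<\infty$, finishing the proof. The main obstacle I anticipate is precisely the gap between what the hypothesis controls (balls centered at the origin) and what $I(t)$ actually measures (balls centered throughout the support of $\mu$); the annular inclusion is the device that bridges this gap.
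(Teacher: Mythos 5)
Your argument is correct. It shares with the paper the two structural pillars — reduction to $\mathcal{H}^+(\mu)<\infty$ via Lemma~\ref{LemSchochet}, and the radialization of the energy through the reverse triangle inequality — but it executes both steps by a genuinely different device. The paper (Lemma~\ref{Lem_Appl} and Corollary~\ref{CorCie}) works at the level of the kernel, using the pointwise bound $\log^+\frac{1}{|x-y|}\le\log^+\frac{1}{||x|-|y||}$ to prove $\mathcal{H}^+(\mu)\le\mathcal{H}^+(dG)$ for the radial cumulative distribution function $G(r)=c_1r^\alpha$, and then simply cites the one-dimensional results: Proposition~\ref{Prop_lp} (since $G'=c_1\alpha r^{\alpha-1}\in L^p$ for some $p>1$) or Proposition~\ref{Prop_Calpha} (since $G\in C^{0,\min(\alpha,1)}$), the latter resting on a Lebesgue--Stieltjes integration by parts in Lemma~\ref{Lem_Calpha}. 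You instead decompose the logarithm by the layer-cake identity and bound $I(t)=\int\mu(B(x,t))\,d\mu(x)$ via the annular inclusion $B(x,t)\subset\{y:||y|-|x||<t\}$; your subadditivity estimate $(a+t)^\alpha-(a-t)_+^\alpha\le(2t)^\alpha$ is exactly the H\"older continuity of $G$ re-derived by hand. What your route buys is a short, self-contained and entirely elementary proof with an explicit power bound $I(t)\lesssim t^{\min(\alpha,1)}$ (your separate treatment of $\alpha>1$ could even be skipped, since $(a+t)^\alpha-(a-t)_+^\alpha\le(a+t)^{\min(\alpha,1)}-(a-t)_+^{\min(\alpha,1)}\cdot$const fails in general, but the boundedness of $a^{\alpha-1}$ makes that case immediate anyway); it is, however, closer in spirit to the $t$-energy methods of the original reference that this paper deliberately avoids. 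What the paper's route buys is modularity: the same two lemmas dispose of all the other classes treated in Section~\ref{Sec2} (general $C^{0,\alpha}$ distribution functions, $L^p$ densities, the log-modulus class of Remark~\ref{Rem_logbeta}), whereas your computation is tailored to the exact power law $c_1r^\alpha$.
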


In this note, motivated by studies in \cite{CieSzu}, we go beyond Theorem \ref{Th_CieSzu}.
We investigate, namely, singular continuous measures belonging to $\mathcal{M}_+(\mathbb{R}^2) \cap H^{-1}(\mathbb{R}^2)$  and derive, using formula \eqref{H+}, simple analytical and geometric conditions characterizing such measures. We begin with measures supported on a line $\{(x_1,0): x_1 \in \mathbb{R}\}$ and then generalize the results to measures with more general support. In particular, we recover Theorem \ref{Th_CieSzu} as a special case. Let us note that our methods are based on transformation of formula \eqref{H+}, which, in contrast to t-energy methods (see \cite{Mat}) used in \cite{CieSzu} allow us to extract more detailed information on measures.
\\

Measure supported on a line can be written in the form
\begin{equation*}
\omega = \eta(dx_1)\delta_0(dx_2),
\end{equation*}
where $x=(x_1,x_2) \in \mathbb{R}^2$ and $\eta$ is a compactly supported nonnegative Radon measure on $\mathbb{R}$ with no discrete part. Measure $\omega$ can be equivalently represented as 
\begin{equation}
\label{eq_omegaF}
\omega = dF(x_1) \delta_0 (dx_2),
\end{equation}
where $F: \mathbb{R} \to [0,\infty)$ is the continuous, nondecreasing cumulative distribution function of $\eta$, given by
\begin{equation}
\label{eq_defF}
F(x) := \eta((-\infty,x]).
\end{equation}
If $\eta$ is absolutely continuous with respect to the one-dimensional Lebesgue measure or, equivalently, $F \in W^{1,1}_{loc} (\mathbb{R})$, then we can represent $\omega$ as 
\begin{equation}
\label{eq_omegaf}
\omega = f(x_1)dx_1 \delta_0 (dx_2),
\end{equation}
where $f:= F'$ is a nonnegative compactly supported function belonging to $L^1(\mathbb{R})$.
In the following, we study, under which conditions on $F$ and $f$ does $\omega$ belong to $H^{-1}(\mathbb{R}^2)$. We consider the following cases:
\begin{itemize}
\item $f \in L^1$,
\item $f \in L^{\infty}$ or equivalently $F$ -- Lipschitz continuous,
\item $f \in L^p$ for $1<p<\infty$,
\item $f \in L (\log L)^{\gamma}$, where $L (\log L)^{\gamma}$ is the Calder\'on-Zygmund class, see Section \ref{Sec3}.
\item $F$ -- continuous,
\item $F$ -- H\"older continuous with exponent $\alpha \in (0,1)$.
\end{itemize}
We prove that any of the conditions $f \in L^{\infty}$, $f \in L^p$, $F$ - H\"older continuous, $F$-Lipschitz continuous is sufficient (Section \ref{Sec2}). On the other hand, we show that conditions $f \in L^1$, $f \in L (\log L)^{\gamma}$ for $\gamma < 1\slash 2$ or $F$ being absolutely continuous are not sufficient (Section \ref{Sec3}). Finally (Section \ref{Sec4}) we apply these results to more general nonnegative measures $\omega$ and discuss the Hausdorff dimension of support of $\omega$. We comment also on possible numerical applications.
\section{Classes of measures belonging to $H^{-1}$}
\label{Sec2}
For measures $\omega$ of the form \eqref{eq_omegaF} formula \eqref{H+} reduces to
\begin{equation}
\mathcal{H}^+(\omega) = \mathcal{H}^+(dF) := \int_{\mathbb{R}} \int_{\mathbb{R}} \log^+ \frac {1}{|x-y|} dF(x)dF(y),
\label{H+F}
\end{equation}
where integrals are understood in the Lebesgue-Stieltjes sense (i.e. $dF \equiv \eta$ is the Lebesgue-Stieltjes measure generated by equality \eqref{eq_defF}, see \cite{CarBru}). Similarly, for measures $\omega$ of the form \eqref{eq_omegaf}, we obtain
\begin{equation}
\mathcal{H}^+(\omega)= \mathcal{H}^+(f) := \int_{\mathbb{R}} \int_{\mathbb{R}} \log^+ \frac {1}{|x-y|} f(x)f(y) dx dy.
\label{H+f}
\end{equation}
So prepared, we are ready to study particular cases of Problem \ref{Prob1}. By Lemma \ref{LemSchochet}, it suffices to determine whether  $\mathcal{H}^+(dF)$ or $\mathcal{H}^+(f)$ are finite, using formulas \eqref{H+F} and \eqref{H+f}, respectively. We begin with the simple cases of $f \in L^{\infty}$ and $f \in L^p$, $p>1$.

\begin{proposition}
If $f$ is bounded and compactly supported then $\mathcal{H}^+(f) < \infty$.
\end{proposition}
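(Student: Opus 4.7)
The plan is to bound $\mathcal{H}^+(f)$ directly by exploiting the two hypotheses on $f$. Let $M := \|f\|_{L^\infty}$ and let $[-R,R]$ be an interval containing $\mathrm{supp}(f)$. Substituting into \eqref{H+f} and pulling the $L^\infty$ bound outside the integral, I would first estimate
\begin{equation*}
\mathcal{H}^+(f) \;\le\; M^2 \int_{-R}^{R}\int_{-R}^{R} \log^+ \frac{1}{|x-y|}\, dx\, dy.
\end{equation*}
This reduces the problem to showing that the kernel $\log^+(1/|x-y|)$ is integrable on the bounded square $[-R,R]^2$.

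Next I would apply Fubini's theorem and, for each fixed $x \in [-R,R]$, evaluate the inner integral by the substitution $t = y - x$:
\begin{equation*}
\int_{-R}^{R} \log^+ \frac{1}{|x-y|}\, dy \;\le\; \int_{-1}^{1} \log^+ \frac{1}{|t|}\, dt \;=\; 2\int_0^1 \log\frac{1}{t}\, dt \;=\; 2.
\end{equation*}
Here the first inequality uses that $\log^+(1/|t|)$ vanishes for $|t|\ge 1$, so enlarging the range of integration is harmless. This bound is uniform in $x$, so integrating in $x$ over $[-R,R]$ yields
\begin{equation*}
\mathcal{H}^+(f) \;\le\; M^2 \cdot 2R \cdot 2 \;=\; 4 M^2 R \;<\; \infty,
\end{equation*}
which is the desired conclusion.

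There is no real obstacle: the argument is a two-line calculation relying on (i) compactness of the support to confine the integration to a bounded set, (ii) boundedness of $f$ to replace the $f\otimes f$ weight by a constant, and (iii) the classical fact that $\log(1/t)$ is integrable at the origin in one dimension. The only point worth noting is that the one-dimensional integrability of the logarithmic kernel is exactly why the line case is delicate: we are essentially sitting at the critical dimension for $\log^+(1/|x-y|)$ to act as an $L^1_{\mathrm{loc}}$ kernel, which is what makes the subsequent weaker hypotheses (treated in later sections) nontrivial.
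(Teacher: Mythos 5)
Your proof is correct and follows the same approach as the paper: bound $f(x)f(y)$ by $\|f\|_{L^\infty}^2$ and observe that $\log^+\frac{1}{|x-y|}$ is integrable over the bounded square containing the support. You merely spell out the explicit computation $\int_0^1 \log\frac{1}{t}\,dt = 1$ that the paper leaves implicit.
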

\begin{proof}
$$ \mathcal{H}^+(f) \le \|f\|_{L^{\infty}}^2 \int_{{\rm supp}(f)}\int_{{\rm supp}(f)} \log^+ \frac {1}{|x-y|} dxdy < \infty, $$
where ${\rm supp}(f)$ denotes the support of function $f$.
\end{proof}
\begin{corollary}
For $F$ Lipschitz continuous $\mathcal{H}^+(dF) < \infty$.
\end{corollary}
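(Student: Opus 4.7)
The plan is to reduce the corollary directly to the preceding proposition by showing that a Lipschitz-continuous $F$ admits a bounded, compactly supported derivative $f$ representing the Lebesgue-Stieltjes measure $dF$.

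First I would invoke the classical fact that every Lipschitz function $F\colon\mathbb{R}\to\mathbb{R}$ is absolutely continuous, hence differentiable almost everywhere with $f := F' \in L^\infty(\mathbb{R})$, and $\|f\|_{L^\infty} \le \mathrm{Lip}(F)$. Absolute continuity then gives $F(b)-F(a) = \int_a^b f(t)\,dt$ for all $a\le b$, which means that the Lebesgue-Stieltjes measure $dF$ coincides with the measure $f(x_1)\,dx_1$. Consequently the representation \eqref{eq_omegaf} is available and $\mathcal{H}^+(dF) = \mathcal{H}^+(f)$ in the notation of \eqref{H+F}--\eqref{H+f}.

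Next I would observe that, since the underlying $\eta$ is assumed to be compactly supported, $F$ is constant outside some compact interval $[-R,R]$, so $f$ vanishes a.e. outside $[-R,R]$ and is therefore compactly supported. The previous proposition then applies directly and yields $\mathcal{H}^+(f) < \infty$, hence $\mathcal{H}^+(dF) < \infty$.

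There is essentially no obstacle; the only substantive step is recalling that Lipschitz continuity on $\mathbb{R}$ implies absolute continuity with a bounded a.e.-derivative, which is a standard result in real analysis. The role of the compact-support hypothesis on $\eta$ (implicit in the setting of the section) is merely to guarantee that the support of $f$ is compact, so that the bound in the previous proposition does not blow up.
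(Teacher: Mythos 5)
Your proposal is correct and follows exactly the route the paper intends: the corollary is stated without proof as an immediate consequence of the preceding proposition, via the standard fact that a Lipschitz $F$ is absolutely continuous with $dF = F'\,dx$ and $F' \in L^\infty$ compactly supported (the paper itself flags the equivalence ``$f \in L^{\infty}$ or equivalently $F$ Lipschitz continuous'' in the introduction). Nothing is missing.
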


\begin{proposition}
\label{Prop_lp}
If $f \in L^p$, $1 < p \le \infty$ and $f$ is compactly supported then $\mathcal{H}^+(f) < \infty$.
\end{proposition}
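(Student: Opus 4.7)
The plan is to reduce the finiteness of $\mathcal{H}^+(f)$ to a double application of Hölder's inequality, exploiting that $\log^+(1/|\cdot|)$ has only a logarithmic (hence locally $L^q$-integrable for every finite $q$) singularity at the origin.

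First, I would fix a compact set $K \subset \mathbb{R}$ containing $\operatorname{supp}(f)$ and write
\begin{equation*}
\mathcal{H}^+(f) = \int_K f(x) \, I(x) \, dx, \qquad I(x) := \int_K \log^+\!\frac{1}{|x-y|}\, f(y)\, dy.
\end{equation*}
With $p' = p/(p-1)$ the conjugate exponent (finite, since $p>1$), Hölder's inequality applied to the inner integral gives
\begin{equation*}
I(x) \le \|f\|_{L^p} \left( \int_K \Bigl(\log^+\!\tfrac{1}{|x-y|}\Bigr)^{p'} dy \right)^{1/p'}.
\end{equation*}

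Next I would verify that the right-hand factor is bounded uniformly in $x \in K$. Because $K$ is bounded and $\log^+(1/|z|)$ vanishes for $|z| \ge 1$, a translation $z = x-y$ reduces the integral to $\int_{-1}^{1} |\log |z||^{p'} dz$, which is finite for any $p' < \infty$ by a standard change of variables $z = e^{-t}$. Denoting this bound by $C = C(p) < \infty$, we obtain $I(x) \le C\|f\|_{L^p}$ for every $x \in K$.

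Plugging this estimate into the outer integral and applying Hölder once more to $\int_K f(x)\, dx \le \|f\|_{L^p} |K|^{1/p'}$ yields
\begin{equation*}
\mathcal{H}^+(f) \le C\, \|f\|_{L^p} \int_K f(x)\, dx \le C\, |K|^{1/p'}\, \|f\|_{L^p}^2 < \infty,
\end{equation*}
which is the desired conclusion. There is no genuine obstacle here: the only point worth checking carefully is that the assumption $p > 1$ is exactly what guarantees $p' < \infty$, so that the local $L^{p'}$-integrability of $\log^+(1/|\cdot|)$ — the feature that makes the argument go through — is available. The borderline case $p = 1$ would force $p' = \infty$ and this estimate would break down, consistent with the paper's later claim that $L^1$ is insufficient.
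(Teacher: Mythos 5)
Your proof is correct and follows essentially the same route as the paper's: both arguments rest on applying H\"older's inequality twice, using the local $L^{q}$-integrability of $\log^{+}(1/|\cdot|)$ for every finite $q$ together with the boundedness of the support, and both arrive at a bound of the form $C(p)\,|K|^{1/p'}\|f\|_{L^p}^2$. The only cosmetic difference is the order of operations (you bound the inner convolution uniformly first, while the paper applies the outer H\"older pairing first), which changes nothing of substance.
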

\begin{proof}
Let $f \in L^p$ have a compact support such that ${\rm supp} (f) \subset B(0,R)$, where $B(0,R)$ is the closed ball centered at $0$ and with radius $R$. Then, setting $q$ such that $\frac 1 p + \frac 1 q  = 1$ and using the H\"older  and Young inequalities we obtain
\begin{eqnarray*}
\int_{\mathbb{R}}\int_{\mathbb{R}} \log^+ \frac 1 {|x-y|} f(x)f(y)dxdy &=& \int_{{B(0,R+1)}}\int_{B(0,R+1)} \log^+ \frac 1 {|x-y|} f(x)f(y)dxdy\\
&\le& \left\| \int_{B(0,R+1)}  \log^+ \frac 1 {|\cdot -y|} f(y)dy \right\|_q \|f\|_p \\
&\le&  \left\| \int_{B(0,R+1)}  \log^+ \frac 1 {|\cdot -y|} f(y)dy \right\|_{\infty} [2(R+1)]^{\frac 1 q} \|f\|_p \\
&\le&  \left\|\log^+ \frac {1}{|\cdot|} \bold{1}_{B(0,R+1)} (\cdot)\right\|_q  [2(R+1)]^{\frac 1 q} \|f\|_p^2 < +\infty.
\end{eqnarray*}
\end{proof}




Next, we consider the more demanding case of $F$ being H\"older continuous. Recall that $F \in C^{0,\alpha}(\mathbb{R})$, $0<\alpha\le 1$, if there exists a constant $K>0$ such that $|F(x+y) - F(x)| \le K |y|^{\alpha}$ for every $x,y \in \mathbb{R}$.

\begin{proposition}
\label{Prop_Calpha}
If $F \in C^{0,\alpha}$, $0 < \alpha \le 1$ then $\mathcal{H}^+(dF) < \infty$.
\end{proposition}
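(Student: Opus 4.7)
The plan is to fix one variable, do the inner integral first, and reduce the double integral to a single integral against $dF$ whose integrand is a uniform constant. More precisely, I will bound the inner integral
$$I(y) := \int_{\mathbb{R}} \log^+\frac{1}{|x-y|}\, dF(x)$$
uniformly in $y$, and then estimate $\mathcal{H}^+(dF) = \int I(y)\,dF(y)$ by $\|I\|_\infty \cdot \eta(\mathbb{R})$, which is finite because $dF = \eta$ has finite mass (it is compactly supported with continuous cumulative distribution).

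To control $I(y)$ I would use the layer-cake identity $\log^+(1/r) = \int_0^1 \mathbf{1}_{\{r < u\}} \frac{du}{u}$ and Fubini's theorem to rewrite
$$I(y) = \int_0^1 \frac{\eta(\{x: |x-y| < u\})}{u}\, du = \int_0^1 \frac{F(y+u^-) - F(y-u)}{u}\, du.$$
Since $F$ is continuous, the numerator equals $F(y+u) - F(y-u)$, and the H\"older assumption yields $F(y+u) - F(y-u) \le K(2u)^\alpha$. Plugging this in gives
$$I(y) \le K 2^\alpha \int_0^1 u^{\alpha - 1}\, du = \frac{K 2^\alpha}{\alpha},$$
which is finite precisely because $\alpha > 0$.

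Combining these two steps,
$$\mathcal{H}^+(dF) \le \frac{K 2^\alpha}{\alpha}\,\eta(\mathbb{R}) < \infty,$$
which finishes the proof. There is no real obstacle: the only subtlety is treating the Lebesgue--Stieltjes integration carefully (invoking continuity of $F$ so that the one-sided limits $F(y+u^-)$ cause no trouble, and justifying Fubini via nonnegativity of the integrand). The H\"older exponent enters only through the integrability of $u^{\alpha-1}$ near $0$, so the argument in fact gives a quantitative bound on $\mathcal{H}^+(dF)$ in terms of $K$, $\alpha$ and the total mass of $\eta$.
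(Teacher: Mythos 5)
Your proof is correct, and it reaches the goal by a genuinely different route from the paper. The paper proves the proposition through an auxiliary lemma (Lemma \ref{Lem_Calpha}): it transforms the inner Stieltjes integral $\int \log\frac{1}{|x-y|}\,dF(y)$ by splitting it into the two one-sided pieces and integrating by parts, with hypotheses guaranteeing that the boundary terms $\log(1/\varepsilon)\,(F(x\pm\varepsilon)-F(x))$ vanish and that $\int_0^1 \frac{F(x+y)-F(x)}{y}\,dy$ is uniformly bounded; the H\"older condition is then checked against those hypotheses. You instead use the layer-cake identity $\log^+(1/r)=\int_0^1 \mathbf{1}_{\{r<u\}}\,\frac{du}{u}$ and Tonelli to land directly on
$$I(y)=\int_0^1 \frac{\eta\bigl((y-u,\,y+u)\bigr)}{u}\,du=\int_0^1\frac{F(y+u)-F(y-u)}{u}\,du,$$
which is exactly the quantity the paper's lemma produces, but obtained without any integration by parts and without having to verify that the boundary terms vanish (conditions i)--ii) of the lemma become unnecessary, since Tonelli handles the limit automatically for nonnegative integrands). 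Your observation that continuity of $F$ disposes of the open-versus-closed interval issue is the right care to take. What each approach buys: the paper's lemma is formulated under hypotheses weaker than H\"older continuity and is reused later (Remark \ref{Rem_logbeta} and the Hausdorff-dimension-zero construction), so it has independent value as a standalone criterion; your argument is more elementary and self-contained, and it delivers immediately the quantitative bound $\mathcal{H}^+(dF)\le \frac{K2^{\alpha}}{\alpha}\,\eta(\mathbb{R})$, which matches (and slightly sharpens, since $2^{\alpha}\le 2$) the constant $2(K/\alpha)\,\omega(\mathbb{R}^2)$ recorded in Remark \ref{Rem_UniformH}.
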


Proposition \ref{Prop_Calpha} is a consequence of the following lemma.

\begin{lemma}
\label{Lem_Calpha}
Suppose a bounded continuous nondecreasing $F:\mathbb{R} \to [0,\infty)$ satisfies:
\begin{enumerate}[i)]
\item $(F(x+\varepsilon) - F(x)) \log \varepsilon \to 0$ as $\varepsilon \to 0$ uniformly in $x$,
\item $(F(x-\varepsilon) - F(x)) \log \varepsilon \to 0$ as $\varepsilon \to 0$ uniformly in $x$,
\item $\int_0^1 \frac {F(x+y) - F(x)}{y} dy \le C$ uniformly in $x$,
\item $\int_0^1 \frac {F(x) - F(x-y)}{y} dy \le C$ uniformly in $x$.
\end{enumerate}
Then 
$$\mathcal{H^+}(dF) = \int_{\mathbb{R}} \left( \int_{0}^{1} \frac 1 {y}(F(x+y)-F(x-y)) dy \right) dF(x) $$
and in particular, $\mathcal{H^+}(dF) < +\infty$.
\end{lemma}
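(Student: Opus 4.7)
The plan is to compute, for each fixed $x$, the inner potential
$$
I(x) := \int_{\mathbb{R}} \log^+ \frac{1}{|x-y|}\, dF(y) = \int_{x-1}^{x+1}\log\frac{1}{|x-y|}\,dF(y),
$$
show that $I(x) = \int_0^1 \frac{F(x+t)-F(x-t)}{t}\,dt$, and then apply Fubini--Tonelli (permitted by nonnegativity of the integrand) to integrate $I(x)$ against $dF(x)$ and obtain the claimed identity for $\mathcal{H}^+(dF)$. Finiteness will then follow from the uniform bound $I(x)\le 2C$ given by conditions iii)--iv) together with the fact that $\int dF = F(\infty)-F(-\infty)<\infty$ since $F$ is bounded.

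The heart of the argument is the identity for $I(x)$, which I would obtain by Lebesgue--Stieltjes integration by parts on each side of $x$ separately. On the right side, for $\varepsilon>0$,
$$
\int_{x+\varepsilon}^{x+1}\log\frac{1}{y-x}\,dF(y)=\bigl[-\log(y-x)\,F(y)\bigr]_{x+\varepsilon}^{x+1}+\int_{x+\varepsilon}^{x+1}\frac{F(y)}{y-x}\,dy,
$$
and writing $F(y)=F(x)+(F(y)-F(x))$ in the last integral and collecting the logarithmic terms yields
$$
\int_{x+\varepsilon}^{x+1}\log\frac{1}{y-x}\,dF(y)=(F(x+\varepsilon)-F(x))\log\varepsilon+\int_{x+\varepsilon}^{x+1}\frac{F(y)-F(x)}{y-x}\,dy.
$$
By condition i) the boundary term vanishes (uniformly in $x$) as $\varepsilon\downarrow 0$, while by iii) the remaining integral converges, after the substitution $t=y-x$, to $\int_0^1\frac{F(x+t)-F(x)}{t}\,dt$. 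The analogous manipulation on $[x-1,x-\varepsilon]$, controlled by ii) and iv), produces $\int_0^1\frac{F(x)-F(x-t)}{t}\,dt$. Summing gives the desired expression for $I(x)$.

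The two potential obstacles are both tied to the singularity of $-\log|x-y|$ at $y=x$: justifying the integration-by-parts formula on a half-interval whose endpoint is the singular point (handled by truncating at $x\pm\varepsilon$, which is exactly where conditions i)--ii) are designed to make the boundary term disappear), and the fact that the limits must hold uniformly in $x$ so that Fubini against $dF(x)$ can subsequently be applied to exchange the $\varepsilon$-limit with the outer integral. Once that is done, no point-mass issues arise because $F$ is continuous, so $dF(\{x\})=0$ and the single-point sets causing the logarithmic blow-up contribute nothing. The final bound $\mathcal{H}^+(dF)=\int I(x)\,dF(x)\le 2C\,\|dF\|<\infty$ is then immediate.
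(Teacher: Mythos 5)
Your proposal is correct and follows essentially the same route as the paper: both truncate at $x\pm\varepsilon$, perform Lebesgue--Stieltjes integration by parts on each side of the singularity, use conditions i)--ii) to kill the boundary term $(F(x\pm\varepsilon)-F(x))\log\varepsilon$ uniformly in $x$, invoke iii)--iv) for the surviving integrals, and conclude with $\mathcal{H}^+(dF)\le 2C\,\omega(\mathbb{R}^2)$. The only cosmetic difference is that you evaluate the inner potential $I(x)$ pointwise before integrating against $dF(x)$, whereas the paper carries the outer integral throughout and passes to the limit via dominated convergence.
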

\begin{proof}
Using the properties of Lebesgue-Stieltjes integrals (see \cite{CarBru}) we obtain:
\begin{eqnarray*}
\mathcal{H}^+(dF) &=& \int_{\mathbb{R}} \int_{\mathbb{R}} \log^+ \frac {1}{|x-y|} dF(x)dF(y)\\
&=& \int_{\mathbb{R}} \left[\int_{x-1}^{x+1} \log \frac 1 {|x-y|} dF(y) \right] dF(x)\\ 
&=& \int_{\mathbb{R}} \left[\int_{-1}^{1} \log \frac 1 {|y|} dF(x+y) \right] dF(x)\\ 
&=& \int_{\mathbb{R}} \left[\int_{0}^{1} \log \left(\frac 1 {y}\right) d(F(x+y)-F(x-y)) \right] dF(x)\\ 
&=& \int_{\mathbb{R}} \int_{0}^{1} \log \left(\frac 1 {y}\right) d(F(x+y)-F(x)) dF(x) \\  && + \int_{\mathbb{R}} \int_{0}^{1} \log \left(\frac 1 {y}\right) d(F(x)-F(x-y)) dF(x)\\ 
&=& \int_{\mathbb{R}} \lim_{\varepsilon \to 0}\left[\int_{\varepsilon}^{1} \log \left(\frac 1 {y}\right) d(F(x+y)-F(x)) \right] dF(x)\\ 
&& + \int_{\mathbb{R}} \lim_{\varepsilon \to 0}\left[\int_{\varepsilon}^{1} \log \left(\frac 1 {y}\right) d(F(x)-F(x-y)) \right] dF(x)\\ 
&=& \int_{\mathbb{R}} \lim_{\varepsilon \to 0}\left[\left[ \log\left(\frac 1 y \right)(F(x+y) - F(x))\right]_\varepsilon^1  + \int_{\varepsilon}^{1} \frac 1 {y}(F(x+y)-F(x)) dy \right] dF(x)\\ 
&&  + \int_{\mathbb{R}} \lim_{\varepsilon \to 0}\left[\left[ \log\left(\frac 1 y \right)(F(x) - F(x-y))\right]_\varepsilon^1  + \int_{\varepsilon}^{1} \frac 1 {y}(F(x)-F(x-y)) dy \right] dF(x)\\ 
&=& \int_{\mathbb{R}} \left( \int_{0}^{1} \frac 1 {y}(F(x+y) - F(x) + F(x) -F(x-y)) dy \right) dF(x)  \le 2C \int_{\mathbb{R}} dF(x),
\end{eqnarray*}
where in the last equality we used the Lebesgue dominated convergence theorem and the fact that measure $dF$ is bounded.
\end{proof}

\begin{proof}[Proof of Proposition \ref{Prop_Calpha}]
For $F \in C^{0,\alpha}$, where $0 < \alpha \le 1$,  we obtain
\begin{equation*}
|F(x \pm  \varepsilon) - F(x)| \log(\varepsilon) \le K\varepsilon^{\alpha} \log(\varepsilon) \to 0
\end{equation*}

as $\varepsilon \to 0$ and
\begin{equation*}
\int_0^1 \frac {|F(x \pm y) - F(x)|}{y} dy \le K\int_0^1 y^{\alpha -1} dy = K \slash \alpha.
\end{equation*}
Using Lemma \ref{Lem_Calpha} we conclude.
\end{proof}

\begin{remark}
\label{Rem_UniformH}
\rm
Proofs of Lemma \ref{Lem_Calpha} and Proposition \ref{Prop_Calpha} show that if $F$ satisfies \\$|F(x+y)-F(x)| \le K |y|^{\alpha}$ then $$\mathcal{H}^+(dF) \le 2(K \slash \alpha) \omega(\mathbb{R}^2).$$
\end{remark}

\begin{remark}
\rm
\label{Rem_logbeta}
Conditions i)-iv) from Lemma \ref{Lem_Calpha} encompass a larger class of functions than functions which are H\"older continuous. For instance, it suffices to assume that $|F(x+y) - F(x)| \le 1 \slash |\log(|y|)|^{\beta}$ for $|y|\le \varepsilon$, $x \in \mathbb{R}$ and fixed $\beta > 1$ and $\varepsilon>0$.
\end{remark}

\begin{remark}
\rm
Due to embedding $W^{1,p}(\mathbb{R}) \hookrightarrow C^{0, 1 - 1\slash p} (\mathbb{R})$ for $p>1$ (see e.g. \cite{AdaFou}), using Proposition \ref{Prop_Calpha} we recover the result from Proposition \ref{Prop_lp}.
\end{remark}

\begin{remark}
\label{RemH-12}
\rm
Results of this section allow us to obtain embeddings of various spaces into the fractional Sobolev space $H^{1\slash 2}$ (see \cite{Tar}) 
as follows.
Distributions belonging to $H^{-1}(\mathbb{R}^2)$, which are supported on the line $\{(x_1,0): x_1 \in \mathbb{R} \}$ may be identified with the space of $H^{- 1\slash 2}(\mathbb{R})$ due to the fact that the trace operator $T: W^{1,2}(\mathbb{R}^2) \to H^{1\slash 2}(\mathbb{R})$ is bounded and has a  bounded right inverse, see \cite[Section 16]{Tar}. 
Hence, if $\omega \in H^{-1}(\mathbb{R}^2)$ is of the form \eqref{eq_omegaF} then $dF$ belongs to $H^{-1\slash 2}(\mathbb{R})$ and consequently $F$ belongs locally to $H^{1 \slash 2}$. Now, Proposition \ref{Prop_Calpha}, for instance, allows us to obtain a local embedding of nondecreasing functions belonging to $C^{0,\alpha}$, $0 < \alpha <1$, into $H^{1 \slash 2}$. 
\end{remark}

It is not possible to extend the results of this section to arbitrary absolutely continuous $F$. In the next section we show counterexamples. 

\section{Counterexamples}
\label{Sec3}
We begin by describing a class of functions, which we will use for construction of counterexmaples for $f \in L^1$ and $f \in L(\log L)^{\gamma}$. Let, namely,
\begin{equation*}
f(x) = \sum_{n=1}^{\infty} h_n \bold{1}_{[a_n,a_n+d_n]}(x),
\end{equation*}
where for every $n=1,2, \dots$ we have $a_n \in \mathbb{R}$, $h_n>1$, $0 < d_n \le 1$ and $a_n + d_n \le a_{n+1}$. Observe that 

\begin{equation*}
\mathcal{H}^+(h \bold{1}_{[a,a+d]}) \ge  \int_{a}^{a+d} \int_{a}^{a+d} \log^+ \frac {1}{|x-y|} h^2 dx dy \ge h^2d^2 \log(1/d)
\end{equation*}
and hence 
\begin{equation}
\label{HH+}
\mathcal{H}^+(f) \ge \sum_{n=1}^{\infty} h_n^2 d_n^2 \log(1 \slash d_n).
\end{equation}
\begin{proposition}
\label{Prop_L1}
There exists a nonnegative compactly supported $f \in L^1$ such that $\mathcal{H}^+(f) = +\infty$.
\end{proposition}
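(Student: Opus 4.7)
The plan is to exploit the lower bound \eqref{HH+} by choosing a family of bumps whose total $L^1$ mass is summable but whose widths are so small that the factor $\log(1/d_n)$ in \eqref{HH+} dwarfs the decay. Concretely, I will take $f$ of the form $\sum_{n=1}^\infty h_n \mathbf{1}_{[a_n,a_n+d_n]}$ and parametrise by the masses $m_n := h_n d_n$, so that $\|f\|_{L^1} = \sum m_n$ while the lower bound \eqref{HH+} reads
\begin{equation*}
\mathcal{H}^+(f) \ge \sum_{n=1}^{\infty} m_n^2 \log(1/d_n).
\end{equation*}
The task then reduces to choosing $m_n \to 0$ summable and $d_n$ much smaller than $m_n$, with $\log(1/d_n)$ large enough to make $\sum m_n^2 \log(1/d_n)$ diverge.

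A concrete choice I would use is $m_n = 1/n^2$ and $d_n = e^{-n^5}$, which gives $h_n = m_n/d_n = n^{-2}e^{n^5} > 1$ for all $n \ge 1$, $\sum m_n = \pi^2/6 < \infty$, and $m_n^2 \log(1/d_n) = n^5/n^4 = n$, so the series in the lower bound diverges. To fit everything in a compact set and keep the intervals $[a_n, a_n+d_n]$ pairwise disjoint, I would take $a_n := 1 - 1/n$; then $a_{n+1} - a_n = 1/(n(n+1))$ is far larger than $d_n = e^{-n^5}$, so the condition $a_n + d_n \le a_{n+1}$ holds for every $n \ge 1$, and all the intervals lie in $[0,1]$.

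With this choice $f$ is a nonnegative, compactly supported, measurable function in $L^1(\mathbb{R})$, and plugging into \eqref{HH+} gives $\mathcal{H}^+(f) = +\infty$, as required. The main conceptual point, and the only step that is not completely routine, is recognising that nothing in the $L^1$ norm controls the \emph{shape} of the mass: a tiny amount of mass concentrated on an exponentially narrow interval contributes essentially $m_n^2 \log(1/d_n)$ to the logarithmic energy, so by making $d_n$ shrink much faster than $m_n$, one can force arbitrarily large energy while keeping the $L^1$ norm as small as desired. The remainder is bookkeeping: checking $h_n > 1$, disjointness of the supports, and that the series $\sum m_n$ converges.
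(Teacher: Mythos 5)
Your proposal is correct and follows essentially the same route as the paper: both use the prepared lower bound \eqref{HH+} for a sum of disjoint bumps and pick widths $d_n$ shrinking super-exponentially relative to the masses $m_n=h_nd_n$ so that $\sum m_n<\infty$ while $\sum m_n^2\log(1/d_n)=\infty$ (the paper takes $m_n=2^{-n}$, $d_n=\exp(-2^{2n})$; you take $m_n=n^{-2}$, $d_n=e^{-n^5}$). Your verifications of $h_n>1$, disjointness, and compact support all check out, so the only difference is the choice of constants.
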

\begin{proof}
Take $d_n = \exp(-2^{2n})$ and $h_n = 1 \slash (2^n d_n)$. Then on the one hand
$$\|f\|_{L^1} = \sum_{n=1}^{\infty}h_n d_n = 1.$$
On the other hand, however, by \eqref{HH+}
$$\mathcal{H}^+(f) \ge \sum_{n=1}^{\infty} 2^{-2n} \log(1/d_n) = +\infty.$$
\end{proof}

\begin{corollary}
There exists an absolutely continuous $F$ such that $\mathcal{H}^+(dF) = +\infty$.
\end{corollary}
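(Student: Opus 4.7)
The plan is to read this off directly from Proposition \ref{Prop_L1}. Let $f \in L^1(\mathbb{R})$ be the nonnegative compactly supported function produced in that proposition, satisfying $\mathcal{H}^+(f) = +\infty$, and define
\begin{equation*}
F(x) := \int_{-\infty}^{x} f(t)\, dt.
\end{equation*}

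First I would verify that $F$ has the required qualitative properties: since $f \ge 0$ and $f \in L^1$, the function $F$ is nondecreasing, bounded, and (by the fundamental theorem of calculus for Lebesgue integrals) absolutely continuous on $\mathbb{R}$, with $F' = f$ almost everywhere. In particular, $F \in W^{1,1}_{loc}(\mathbb{R})$, which matches the framework of \eqref{eq_omegaf}.

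Next I would identify the Lebesgue--Stieltjes measure $dF$ with the measure $f(x)\,dx$. Once this identification is made, formulas \eqref{H+F} and \eqref{H+f} agree, giving
\begin{equation*}
\mathcal{H}^+(dF) = \int_{\mathbb{R}}\int_{\mathbb{R}} \log^+\frac{1}{|x-y|}\, dF(x)\,dF(y) = \int_{\mathbb{R}}\int_{\mathbb{R}} \log^+\frac{1}{|x-y|}\, f(x)f(y)\, dx\, dy = \mathcal{H}^+(f) = +\infty.
\end{equation*}

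There is no real obstacle here; the only slightly non-trivial point is invoking absolute continuity of the integral of an $L^1$ function, and the Lebesgue--Stieltjes identification $dF = f\,dx$ when $F$ is an indefinite integral, both of which are standard. The content of the corollary is therefore a reinterpretation of Proposition \ref{Prop_L1} in terms of $F$ rather than $f$.
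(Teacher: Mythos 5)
Your proposal is correct and is exactly the intended argument: the paper states this as an immediate corollary of Proposition \ref{Prop_L1}, with the antiderivative $F(x)=\int_{-\infty}^x f(t)\,dt$ supplying the absolutely continuous function and the identification $dF=f\,dx$ giving $\mathcal{H}^+(dF)=\mathcal{H}^+(f)=+\infty$. Nothing is missing.
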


Using the same construction we can generalize the result to the Calder\'on-Zygmund class $L (\log L)^{\gamma}$, for $\gamma < 1 \slash 2$. Recall that $f \in L (\log L)^{\gamma} (\mathbb{R})$ if 
$$\int_{\mathbb{R}} |f(x)| (\log(1+|f(x)|))^{\gamma} dx < \infty.$$
\begin{proposition}
For every $\gamma< 1 \slash 2$ there exists a nonnegative compactly supported $f \in L (\log L)^{\gamma}$ such that $\mathcal{H}^+(f) = +\infty$.
\end{proposition}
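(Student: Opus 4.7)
The plan is to reuse the building-block construction from the beginning of Section \ref{Sec3}, namely $f = \sum_n h_n \mathbf{1}_{[a_n,a_n+d_n]}$ with disjoint intervals, but to choose $h_n$ and $d_n$ more delicately than in Proposition \ref{Prop_L1}. The previous lower bound \eqref{HH+} gives
\[
  \mathcal{H}^+(f) \;\ge\; \sum_{n=1}^{\infty} h_n^2 d_n^2 \log(1/d_n),
\]
while, since the supports are disjoint and $h_n>1$,
\[
  \int_{\mathbb{R}} f (\log(1+f))^{\gamma} \,dx \;=\; \sum_{n=1}^{\infty} h_n d_n (\log(1+h_n))^{\gamma}.
\]
So the task reduces to choosing two sequences with $h_n^2 d_n^2 \log(1/d_n)$ not summable but $h_n d_n (\log h_n)^{\gamma}$ summable. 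The key leverage is that $\mathcal{H}^+$ sees $h_n d_n$ quadratically, so a big logarithmic gap can be created by squeezing $d_n$ very fast to $0$ and blowing $h_n$ up accordingly.

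Concretely, I would take $d_n = \exp(-2^n)$, so that $\log(1/d_n) = 2^n$, and then pick $h_n$ by prescribing the mass on the $n$-th interval:
\[
  h_n d_n \;=\; \frac{1}{2^{n/2}\sqrt{n \log n}},
\qquad \text{so} \qquad h_n \;=\; \frac{e^{2^{n}}}{2^{n/2}\sqrt{n \log n}}.
\]
Then $\log(1+h_n) = 2^n(1+o(1))$, so that the $L(\log L)^{\gamma}$ integral becomes, up to a constant,
\[
  \sum_n \frac{2^{n\gamma}}{2^{n/2}\sqrt{n\log n}} \;=\; \sum_n \frac{1}{2^{n(1/2-\gamma)}\sqrt{n\log n}},
\]
which converges thanks to $\gamma < 1/2$. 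On the other hand,
\[
  \sum_n h_n^2 d_n^2 \log(1/d_n) \;=\; \sum_n \frac{1}{2^{n} n \log n}\cdot 2^n \;=\; \sum_n \frac{1}{n\log n} \;=\; +\infty,
\]
so $\mathcal{H}^+(f)=+\infty$ by \eqref{HH+}. Disjointness of the intervals and compactness of support can be arranged by placing them at, say, $a_n = 2n$, which is admissible because $d_n \le 1$, after a final truncation (only finitely many intervals would need removal to also attain compact support, and in fact the supports shrink so fast that $\sum d_n$ is already finite — so the union is bounded).

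I expect no serious obstacle: the only care needed is the asymptotic $\log(1+h_n) \sim \log h_n \sim 2^n$ (which requires checking that $\log a_n = -(n/2)\log 2 - \tfrac12\log(n\log n)$ is negligible next to $2^n$, and that $h_n > 1$ for all $n$ large enough, both trivial). The borderline exponent $\gamma = 1/2$ is exactly the threshold in this construction, reflecting the fact that the bound \eqref{HH+} essentially compares $\sum a_n^2 b_n$ with $\sum a_n b_n^{\gamma}$ after the substitutions $a_n = h_n d_n$, $b_n = \log(1/d_n)$, which is a Cauchy–Schwarz-type tradeoff saturated at $\gamma = 1/2$.
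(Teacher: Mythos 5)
Your proof is correct and follows essentially the same route as the paper: the paper simply observes that the function already constructed in Proposition \ref{Prop_L1} (with $d_n=e^{-2^{2n}}$, $h_nd_n=2^{-n}$) lies in $L(\log L)^{\gamma}$ for every $\gamma<1/2$, whereas you re-tune the parameters to $d_n=e^{-2^{n}}$, $h_nd_n=2^{-n/2}(n\log n)^{-1/2}$ and check the same two sums, making \eqref{HH+} diverge while keeping $\sum_n h_nd_n(\log(1+h_n))^{\gamma}$ finite. Only cosmetic repairs are needed: start the sum at $n\ge 2$ so that $\log n\neq 0$, and place the intervals consecutively (e.g.\ $a_{n+1}=a_n+d_n$, using $\sum_n d_n<\infty$) rather than at $a_n=2n$, which would not give compact support.
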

\begin{proof}
A direct calculation shows that function $f$ constructed in Proposition \ref{Prop_L1} belongs in fact to $L(\log L)^{\gamma}$ for every $\gamma < 1 \slash 2$.
\end{proof}

\section{Applications}
\label{Sec4}

To apply the results of the previous sections it is useful to generalize them to the two-dimensional setting. We begin by defining the radial cumulative distribution function of a measure $\omega \in \mathcal{M}_+(\mathbb{R}^2)$.

\begin{equation}
\label{eq_defG}
G(r) := \begin{cases} \omega(B(0,r)) &\mbox{ for } r>0, \\ 0 &\mbox{ otherwise,}\end{cases}
\end{equation}
where $B(0,r)$ is the closed ball centered at $0$ and with radius $r$.
\noindent Using $G(r)$ we estimate $\mathcal{H}^+(\omega)$ by $\mathcal{H}^+(dG)$ as follows.
\begin{lemma}
\label{Lem_Appl}
Let $\omega$ be a compactly supported nonnegative Radon measure on $\mathbb{R}^2$. 
Let $G$ be its radial cumulative distribution function defined by \eqref{eq_defG}. Then

\begin{enumerate}[i)]
\item  
for every Borel function $h: [0,\infty) \to [0,\infty)$
\begin{equation}
\label{Eq_omegaradial}
\int_{\mathbb{R}^2} h(|x|)\omega(dx) = \int_{[0,{\infty})} h(r) dG(r),
\end{equation}

\item $\mathcal{H}^+(\omega) \le \mathcal{H}^+(dG)$.
\end{enumerate}
\end{lemma}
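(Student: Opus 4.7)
The proposal is to handle the two parts in sequence, with part (i) doing the measure-theoretic bookkeeping and part (ii) then reducing to a simple geometric inequality.

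For part (i), the idea is that $G$ is exactly the cumulative distribution function of the pushforward of $\omega$ under the map $x \mapsto |x|$, so \eqref{Eq_omegaradial} is really just the abstract change-of-variables formula for pushforward measures. I would prove it by the standard three-step ladder: first check it for $h = \mathbf{1}_{[0,r]}$, where both sides equal $\omega(B(0,r)) = G(r)$ by definition of $G$; then extend by linearity to nonnegative simple functions of the form $\sum c_k \mathbf{1}_{[0,r_k]}$ (and, more generally, to simple functions built from half-open intervals by using $G(r_2) - G(r_1) = \omega(B(0,r_2) \setminus B(0,r_1))$); finally pass to arbitrary nonnegative Borel $h$ by monotone convergence, approximating $h$ from below by such simple functions.

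For part (ii), the key geometric input is the triangle inequality in reverse form,
$$|x-y| \ge \bigl| |x| - |y| \bigr|,$$
which yields
$$\log^+ \frac{1}{|x-y|} \le \log^+ \frac{1}{\bigl| |x| - |y| \bigr|}$$
pointwise on $\mathbb{R}^2 \times \mathbb{R}^2$ (with the convention that the right-hand side is $+\infty$ on the diagonal $|x| = |y|$, so the inequality trivially persists there). Integrating against $\omega(dx)\omega(dy)$ gives
$$\mathcal{H}^+(\omega) \le \int_{\mathbb{R}^2}\int_{\mathbb{R}^2} \log^+ \frac{1}{\bigl||x|-|y|\bigr|}\,\omega(dx)\,\omega(dy).$$
Now I would apply part (i) in the inner integral with $h(s) = \log^+ \frac{1}{||x|-s|}$ (for each fixed $x$) to rewrite the right-hand side as an integral against $\omega(dx)\,dG(s)$, and then apply part (i) once more in the remaining $x$-integral with $h(r) = \log^+ \frac{1}{|r-s|}$ to obtain $\int\!\int \log^+ \frac{1}{|r-s|}\,dG(r)\,dG(s) = \mathcal{H}^+(dG)$. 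Fubini/Tonelli is justified throughout because every integrand is nonnegative.

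The only delicate point is verifying measurability of the intermediate function $x \mapsto \int \log^+ \frac{1}{||x|-s|}\,dG(s)$ so that the second application of (i) is legitimate, but this follows immediately from Tonelli's theorem since the integrand is jointly Borel measurable and nonnegative. No other serious obstacle arises.
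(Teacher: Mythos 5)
Your proposal is correct and follows essentially the same route as the paper: part (i) via verification on indicator functions of intervals followed by the standard simple-function/monotone-convergence ladder, and part (ii) via the reverse triangle inequality $|x-y| \ge ||x|-|y||$ combined with two applications of (i) and Tonelli. The paper's proof is just a terser version of the same argument.
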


\begin{remark}
\label{Rem1}
\rm
The reverse inequality in Lemma \ref{Lem_Appl}ii is false even up to a constant. For instance, both $\nu_1=\delta_{(1,0)}$ and $\nu_2$ -- a probability measure distributed uniformly on the circle $\{(x_1,x_2): x_1^2 + x_2^2 = 1\}$ have the same radial cumulative distribution function $$G(r) = \bold{1}_{[1,\infty)}(r).$$ Nevertheless, $\mathcal{H}^+(\nu_1) = \mathcal{H}^+(dG) = \infty$ yet $\mathcal{H}^+(\nu_2) < \infty$, see Remark \ref{Rem2}. 
\end{remark}
\begin{remark}
\label{Rem2}
\rm
Inequality in Lemma \ref{Lem_Appl}ii holds for $G$ centered at any $x_0 \in \mathbb{R}^2$, i.e.  $\mathcal{H}^+(\omega) \le \mathcal{H}^+(dG_{x_0})$ for
\begin{equation*}
G_{x_0}(r) := \begin{cases} \omega(B(x_0,r)) &\mbox{ for } r>0, \\ 0 &\mbox{ otherwise.}\end{cases}
\end{equation*}
The choice of $x_0$ is important in order to obtain a useful estimate. Taking, for instance, $x_0 = (1,0)$ we obtain for measure $\nu_2$ from Remark \ref{Rem1} that 
$$G_{x_0}(r) = \begin{cases} 0 &\mbox{ for } r<0, 
\\  (2\slash \pi) \arcsin(r\slash 2)   &\mbox{ for } 0 \le r \le {2},
\\  1   &\mbox{ for } 2 \le r,
\end{cases}$$
which is H\"older continuous with exponent $1 \slash 2$. Thus, $\mathcal{H}^+(\nu_2) \le \mathcal{H}^+(dG_{x_0}) < \infty$. On the other hand, the choice $x_0 = (0,0)$ leads to $\mathcal{H}^+(\nu_2) \le \mathcal{H}^+(dG_{x_0}) = \mathcal{H}^+(\delta_1) = \infty$, which does not allow us to conclude about finiteness of $\mathcal{H}^+(\nu_2)$.
\end{remark}

\begin{proof}[Proof of Lemma \ref{Lem_Appl}]

i) By definition of $G$, equality \eqref{Eq_omegaradial} holds for $h(r) = \bold{1}_{[r_1,r_2]}(r)$ with any $0 \le r_1 < r_2 \le \infty$. Standard approximation arguments for Radon measures and the Lebesgue monotone convergence theorem allow us to prove the case of general $h$.

ii) We observe that $\log^+ \frac {1}{|x-y|} \le \log^+ \frac {1}{||x|-|y||}$, use repeatedly representation from i) as well as the Fubini theorem and calculate:
\begin{eqnarray*}
 \mathcal{H}^+(\omega) &=& \int_{\mathbb{R}^2} \int_{\mathbb{R}^2} \log^+ \frac {1}{|x-y|} \omega(dx)\omega(dy)\\
&\le &\int_{\mathbb{R}^2} \int_{\mathbb{R}^2} \log^+ \frac {1}{||x|-|y||} \omega(dx)\omega(dy)\\
&=& \int_{\mathbb{R}^2} \left[\int_{[0,{\infty})} \log^+ \frac {1}{|r_x-|y||} dG(r_x) \right]\omega(dy)\\
&=& \int_{[0,{\infty})} \int_{[0,{\infty})} \log^+ \frac {1}{|r_x-r_y|} dG(r_x)dG(r_y)\\
&=& \int_{\mathbb{R}} \int_{\mathbb{R}} \log^+ \frac {1}{|r_x-r_y|} dG(r_x)dG(r_y) = \mathcal{H}^+(dG).\\
\end{eqnarray*}

\end{proof}

\begin{corollary}
\label{CorCie}
Fix $\alpha > 0$ and let $\omega$ be a Radon measure such that $\omega(B(0,r))=G(r)$ for 
\begin{equation}
\label{GCieSzu}
G(r)=
\begin{cases} cr^{\alpha} &\mbox{ for }  0 \le r \le R,\\
cR^{\alpha} &\mbox{ for r >R,} \\
0 &\mbox { otherwise.}
\end{cases}
\end{equation}
Then $\omega \in H^{-1}(\mathbb{R}^2)$. Thus,  we recover Theorem \ref{Th_CieSzu}. 
\end{corollary}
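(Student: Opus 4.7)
The plan is to recognize that this is essentially immediate from combining Lemma \ref{Lem_Appl}(ii) with Proposition \ref{Prop_Calpha}. Since $G(r)$ is constant for $r > R$, the measure $\omega$ is supported in the compact ball $B(0,R)$, so Lemma \ref{LemSchochet} is applicable and it suffices to prove $\mathcal{H}^+(\omega)<\infty$. By Lemma \ref{Lem_Appl}(ii), this reduces to showing $\mathcal{H}^+(dG)<\infty$, and for this we aim to apply Proposition \ref{Prop_Calpha} to $G$ (viewed as a function on $\mathbb{R}$ via the extension by $0$ for $r<0$ prescribed in \eqref{GCieSzu}).

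The key step is therefore to verify that $G \in C^{0,\beta}(\mathbb{R})$ for $\beta := \min(\alpha,1)\in(0,1]$. I would split the check into three regions. On $[0,R]$, if $\alpha\le 1$ the subadditivity inequality $|r^\alpha-s^\alpha|\le|r-s|^\alpha$ yields the $\alpha$-H\"older bound with constant $c$; if $\alpha>1$, then $G(r)=cr^\alpha$ is $C^1$ on the bounded interval $[0,R]$ with derivative bounded by $c\alpha R^{\alpha-1}$, hence Lipschitz. At the transition at $r=R$ there is no jump and $G$ is constant beyond, so no issue arises. At the transition at $r=0$, for $s<0\le r$ we have $|G(r)-G(s)|=cr^\alpha\le c(r-s)^\alpha\le K(r-s)^\beta$ on bounded intervals (absorbing constants for $|r-s|$ bounded above). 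Concatenating these bounds across regions gives a global H\"older constant $K$.

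With $G\in C^{0,\beta}(\mathbb{R})$ established, Proposition \ref{Prop_Calpha} yields $\mathcal{H}^+(dG)<\infty$, hence by Lemma \ref{Lem_Appl}(ii) also $\mathcal{H}^+(\omega)<\infty$, and the conclusion $\omega\in H^{-1}(\mathbb{R}^2)$ follows from Lemma \ref{LemSchochet}.

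I do not anticipate a real obstacle here; the only mildly delicate point is the gluing of the H\"older estimate across the two transition points $r=0$ and $r=R$, but both reduce to elementary bookkeeping. It is worth noting that the argument automatically handles the case $\alpha>1$ (where the original Theorem \ref{Th_CieSzu} is also valid), because the passage through $\min(\alpha,1)$ truncates any exponent exceeding one, which is harmless since larger exponents only improve regularity.
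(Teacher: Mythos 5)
Your proof is correct and coincides with the alternative route the paper itself mentions: reduce to $\mathcal{H}^+(dG)$ via Lemma \ref{Lem_Appl}(ii) and apply Proposition \ref{Prop_Calpha} after checking that $G$ is H\"older continuous. The paper's primary argument instead notes that $G'(r)=c\alpha r^{\alpha-1}\bold{1}_{[0,R]}(r)$ lies in $L^p$ for some $p>1$ and invokes Proposition \ref{Prop_lp}; your version is, if anything, slightly more careful than the paper's one-line alternative, since you explicitly truncate the exponent to $\min(\alpha,1)$ to stay within the hypotheses of Proposition \ref{Prop_Calpha} when $\alpha>1$.
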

\begin{proof}
$\mathcal{H}^+(dG) < + \infty$, which follows by the fact that $G'(r) = c\alpha r^{\alpha -1} \bold{1}_{[0,R]}(r)$ belongs to $L^p$ for some $p>1$. Using Proposition \ref{Prop_lp} and Lemmas \ref{LemSchochet}, \ref{Lem_Appl} we conclude. Alternatively, we can use Proposition \ref{Prop_Calpha}, observing that $G(r) \in C^{0,\alpha}$.
\end{proof}


Next, let us investigate the Hausdorff dimension of the support of measures belonging to $H^{-1}(\mathbb{R}^2)$. As we will use Cantor sets and Cantor functions, we recall the definitions and basic properties of them.

\begin{definition}
\label{DefCantor}
\begin{enumerate}[i)]
\item The standard Cantor set is the set $C \subset [0,1]$ constructed inductively as follows. 
\begin{itemize}
\item $Z_0 = [0,1]$. 
\item $Z_1$ is obtained from $Z_0$ by removing the middle third of the interval, i.e. $Z_1 = [0,1\slash 3] \cup [2\slash 3,1]$. 
\item $Z_2$ is obtained from $Z_1$ by removing the middle third of every remaining interval in $Z_1$, i.e. $Z_2 = [0,1\slash 9] \cup [2\slash 9,1 \slash 3] \cup  [2 \slash 3,7\slash 9] \cup [8\slash 9,1]$.
\item $Z_n$ is, in general, obtained by removing the middle third of every remaining interval in $Z_{n-1}$.
\end{itemize}
Finally, $C:= \bigcap_{n=1}^{\infty} Z_n.$
\item The standard Cantor function $\Gamma:[0,1] \to [0,1]$ can be constructed inductively as follows. 
\begin{itemize}
\item $\gamma_0  (x) = x$
\item $\gamma_n(x) = \begin{cases} 1\slash 2\gamma_{n-1}(3x) &\mbox{ for } 0 \le x < 1\slash 3, \\
1 \slash 2 &\mbox { for } 1\slash 3 \le x \le 2 \slash 3,\\
1\slash 2 +  1\slash 2\gamma_{n-1}(3x - 2) &\mbox{ for } 2\slash 3 < x \le 1.
\end{cases}$
\end{itemize}
We define $\Gamma := \lim_{n\to \infty} \gamma_n$, where the convergence is uniform on $[0,1]$. If we prolong $\Gamma$ by $0$ for $x \le 0$ and $1$ for $x \ge 1$ then we obtain a nondecreasing continuous function mapping $\mathbb{R}$ onto $[0,1]$.
\end{enumerate}
\end{definition}
\noindent Let us summarize the basic properties of the standard Cantor set and Cantor function useful later on. For the proofs, we refer the reader to the survey paper \cite{DovMarRia}.
\begin{proposition}
\begin{enumerate}[i)]
\item The standard Cantor set is closed.
\item The dimension of the standard Cantor set equals $\log(2)\slash \log(3)$.
\item The standard Cantor function is H\"older continuous with exponent $\log(2) \slash \log(3)$.
\item Measure $d\Gamma$ is supported on $C$.
\end{enumerate}
\end{proposition}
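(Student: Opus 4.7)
The plan is to handle the four parts in the order listed; (i), (iii), (iv) are quick consequences of the inductive construction and the main work lies in (ii). For (i), each $Z_n$ is a finite union of closed intervals, hence closed, so $C=\bigcap_n Z_n$ is closed. For (iv), $[0,1]\setminus C$ is a countable union of the removed open middle-third intervals, and $\Gamma$ is constant on each such interval (being constant at the stage it is created and never altered thereafter), so $d\Gamma([0,1]\setminus C)=0$ and $d\Gamma$ is supported on $C$.

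For (iii), I would use that across each of the $2^n$ closed intervals of length $3^{-n}$ forming $Z_n$ the function $\Gamma$ increases by exactly $2^{-n}$, while it is constant on every removed middle-third interval. Since at stage $n$ consecutive level-$n$ intervals are separated by a gap of length at least $3^{-n}$, any interval $[x,y]$ of length $\le 3^{-n}$ meets at most two consecutive level-$n$ blocks, yielding $|\Gamma(y)-\Gamma(x)|\le 2\cdot 2^{-n}$. Choosing $n$ with $3^{-(n+1)}<|y-x|\le 3^{-n}$ and using the identity $3^{-s}=2^{-1}$ for $s:=\log 2/\log 3$ gives the Hölder estimate $|\Gamma(y)-\Gamma(x)|\le 4|y-x|^{s}$.

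The main work is (ii). For the upper bound $\dim_H(C)\le s$, I would cover $C$ by the $2^n$ intervals of length $3^{-n}$ forming $Z_n$; the sum $2^n(3^{-n})^s=1$ shows $\mathcal{H}^s(C)\le 1$. For the lower bound, I would apply the mass distribution principle to $\mu:=d\Gamma$. By (iv), $\mu$ is supported on $C$ with $\mu(C)=1$; each level-$n$ block carries mass $2^{-n}$ and has length $3^{-n}$, so any set of diameter $r\in(3^{-(n+1)},3^{-n}]$ meets only a bounded number of level-$n$ blocks and therefore has $\mu$-mass at most $C'r^s$ for a universal $C'$. Hence for any $\delta$-cover $\{U_i\}$ of $C$, $1=\mu(C)\le\sum_i\mu(U_i)\le C'\sum_i|U_i|^s$, giving $\mathcal{H}^s(C)>0$ and $\dim_H(C)\ge s$.

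The only genuinely nontrivial step is the lower bound in (ii); it does not follow from pure covering combinatorics and requires a mass distribution argument (equivalently, an energy estimate exploiting the self-similarity of $C$). The remaining parts are essentially bookkeeping of the inductive construction.
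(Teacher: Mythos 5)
Your proof is correct, but it is worth noting that the paper does not prove this proposition at all: it simply states the four properties and refers the reader to the survey of Dovgoshey, Martio, Ryazanov and Vuorinen (\cite{DovMarRia} in the bibliography). So there is no argument in the paper to compare against; what you have supplied is the standard self-contained treatment. Your parts (i), (iii), (iv) are fine, and the one genuinely nontrivial point, the lower bound $\dim_H(C)\ge \log 2/\log 3$ in (ii), is handled correctly by the mass distribution principle applied to $\mu=d\Gamma$. A small observation that would streamline your write-up: the estimate you prove in (iii), namely $|\Gamma(y)-\Gamma(x)|\le 4|y-x|^{s}$ with $s=\log 2/\log 3$, is exactly the bound $\mu(U)\le 4\,(\mathrm{diam}\,U)^{s}$ needed for the mass distribution argument, since $\mu([x,y])=\Gamma(y)-\Gamma(x)$ by continuity of $\Gamma$; so (ii)'s lower bound is an immediate corollary of (iii) plus (iv) and you need not redo the two-block counting. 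In the context of the paper this level of detail is not required, but your argument is sound and matches what the cited survey contains.
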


\begin{example}
\label{Cor_Cantor1}
Let $\omega$ satisfy $$\omega(B(0,r)) = \Gamma(r),$$ where $\Gamma(r)$ is the standard Cantor function. 
 Then $\omega \in H^{-1}(\mathbb{R}^2)$. 
\end{example}
\begin{proof}
$\Gamma(r)$ is H\"older continuous with exponent $\alpha = \log (2) \slash \log (3)$. The assertion follows by Proposition \ref{Prop_Calpha} and Lemmas \ref{LemSchochet} and \ref{Lem_Appl}.
\end{proof}

Now, we are ready to construct examples of measures belonging to $H^{-1}(\mathbb{R}^2)$ supported on very small sets.
\begin{proposition}
\label{Prop_Cantor2}
A nonnegative Radon measure belonging to $H^{-1}(\mathbb{R}^2)$ may be supported on a set of arbitrary small positive Hausdorff dimension.
\end{proposition}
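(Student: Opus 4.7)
The plan is to mimic the standard Cantor construction from Example \ref{Cor_Cantor1} but with a tunable parameter that makes the Hausdorff dimension of the support as small as we wish. Fix $\lambda \in (0,1/2)$ and build a generalized Cantor set $C_\lambda \subset [0,1]$ by starting with $Z_0=[0,1]$ and, at each stage, replacing every surviving interval of length $\ell$ with its leftmost and rightmost sub-intervals of length $\lambda \ell$ (so the middle portion of relative length $1-2\lambda$ is deleted). Setting $C_\lambda := \bigcap_n Z_n$, a standard similarity-dimension computation (two similarities of ratio $\lambda$) yields
$$\dim_H C_\lambda = \frac{\log 2}{\log(1/\lambda)},$$
which can be made arbitrarily small (but positive) by letting $\lambda \to 0^+$.

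Next I would introduce the associated Cantor-Lebesgue function $\Gamma_\lambda:\mathbb{R}\to[0,1]$, defined by the same inductive recipe as in Definition \ref{DefCantor}(ii) but with the factors $1/3$ and $2/3$ replaced by $\lambda$ and $1-\lambda$, and extended by $0$ on $(-\infty,0)$ and $1$ on $(1,\infty)$. Then $\Gamma_\lambda$ is continuous, nondecreasing, bounded, and the measure $d\Gamma_\lambda$ is supported on $C_\lambda$. The key regularity fact I need is that $\Gamma_\lambda$ is H\"older continuous with exponent $\alpha_\lambda = \log 2/\log(1/\lambda)$: indeed, a dyadic-type argument shows that if $|x-y| \le \lambda^n$ then $x$ and $y$ lie in at most two neighbouring intervals of $Z_n$, on which $\Gamma_\lambda$ oscillates by at most $2 \cdot 2^{-n}$, giving the estimate $|\Gamma_\lambda(x)-\Gamma_\lambda(y)| \le K |x-y|^{\alpha_\lambda}$.

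With these two ingredients in hand, I would define the two-dimensional measure
$$\omega := d\Gamma_\lambda(x_1)\, \delta_0(dx_2),$$
which is a compactly supported nonnegative Radon measure on $\mathbb{R}^2$ whose support equals $C_\lambda \times \{0\}$, and hence has Hausdorff dimension $\log 2/\log(1/\lambda)$. Applying Proposition \ref{Prop_Calpha} to $F=\Gamma_\lambda$ gives $\mathcal{H}^+(d\Gamma_\lambda)<\infty$, and then Lemma \ref{LemSchochet} yields $\omega \in H^{-1}(\mathbb{R}^2)$. Given any $\varepsilon>0$, choosing $\lambda$ small enough that $\log 2/\log(1/\lambda)<\varepsilon$ produces the desired example.

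The only real obstacle is the H\"older estimate for $\Gamma_\lambda$; the Hausdorff-dimension computation for $C_\lambda$ is classical (self-similar set satisfying the open set condition) and the rest is a direct invocation of results already proved. An alternative route, avoiding construction of $\Gamma_\lambda$ by hand, would be to rescale the standard Cantor construction or invoke Remark \ref{Rem_logbeta} to relax the H\"older hypothesis; but the generalized Cantor function approach seems cleanest and keeps the argument parallel to Example \ref{Cor_Cantor1}.
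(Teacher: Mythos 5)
Your proposal is correct and follows essentially the same route as the paper: the paper builds the modified Cantor set $C_K$ by deleting the middle $(K-2)/K$ portion at each stage (your $\lambda = 1/K$), takes $\omega_K = d\Gamma_K(x_1)\,\delta_0(dx_2)$, and uses the H\"older continuity of $\Gamma_K$ with exponent $\log 2/\log K$ (cited from the literature, where you sketch the standard dyadic argument) together with Proposition \ref{Prop_Calpha} to conclude $\omega_K \in H^{-1}(\mathbb{R}^2)$.
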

\begin{proof}
Consider a modified Cantor set $C_K$ obtained by removing in every step of the construction, described in Definition \ref{DefCantor}, the middle $(K-2)\slash K$ portion of every interval (note that for $K=3$ we obtain the standard Cantor set). Let $\Gamma_K(r)$ be the corresponding Cantor function, constructed similarly as in Definition \ref{DefCantor}, and consider the measure
$$\omega_K = d\Gamma_K (x_1) \delta_0(dx_2).$$
Then measure $\omega_K$ is supported on the closed set $C_K$ of dimension $\alpha = \log(2)\slash \log(K)$. Moreover, $\Gamma_K(r)$ is H\"older continuous with the same exponent $\alpha = \log(2)\slash \log(K)$, see e.g. \cite{GorKuk}, and hence $\omega_K \in H^{-1}(\mathbb{R}^2)$. 
\end{proof}
Adapting the above construction, we can prove that a measure belonging to $H^{-1}(\mathbb{R}^2)$ may be supported on a set of Hausdorff dimension $0$.
\begin{proposition}
There exists a nonnegative bounded Radon measure belonging to $H^{-1}(\mathbb{R}^2)$ which is supported on a bounded set of Hausdorff dimension $0$.
\end{proposition}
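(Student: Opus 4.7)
The plan is to push the construction of Proposition \ref{Prop_Cantor2} further by letting the removal ratio grow with the step. Fix a sequence of integers $K_n \ge 3$ with $K_n \to \infty$ (to be chosen at the end). Construct a generalized Cantor set $C \subset [0,1]$ by removing at step $n$ the open middle $(K_n-2)/K_n$ fraction from each of the $2^{n-1}$ intervals surviving at step $n-1$, and let $\ell_n := \prod_{i=1}^n K_i^{-1}$ be the common length of the $2^n$ intervals at step $n$. Define the corresponding Cantor-type function $\Gamma: \mathbb{R} \to [0,1]$ analogously to Definition \ref{DefCantor}ii (so $\Gamma$ is constant equal to $k/2^n$ on the $k$-th gap removed at step $n$, and is extended by $0$ on $(-\infty,0)$ and by $1$ on $(1,\infty)$). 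Set $\omega := d\Gamma(x_1)\,\delta_0(dx_2)$, which is a bounded nonnegative Radon measure of the form \eqref{eq_omegaF} supported on the bounded set $C \times \{0\}$.

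First I would check that $\dim_H(C \times \{0\}) = 0$. Covering $C$ at step $n$ by $2^n$ intervals of length $\ell_n$ gives, for every $s > 0$,
\begin{equation*}
\mathcal{H}^s_{\ell_n}(C) \;\le\; 2^n \ell_n^s \;=\; \exp\!\Bigl(n \log 2 - s \sum_{i=1}^n \log K_i\Bigr),
\end{equation*}
and since $K_n \to \infty$ forces $\tfrac{1}{n}\sum_{i=1}^n \log K_i \to \infty$, this upper bound tends to $0$. Hence $\dim_H(C) = 0$, and therefore $\dim_H(C \times \{0\}) = 0$.

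The heart of the argument is a logarithmic modulus-of-continuity estimate for $\Gamma$ allowing me to invoke Remark \ref{Rem_logbeta}. A straightforward induction shows that $\Gamma$ maps each $n$-th level subinterval onto an interval of length $2^{-n}$, while being constant on every previously removed gap. Since adjacent $n$-th level intervals are separated by a gap of length at least $\ell_{n-1}(K_n-2)/K_n \ge \ell_n$, any two points with $|y|\le \ell_n$ lie in a common $n$-th level interval, so $|\Gamma(x+y)-\Gamma(x)| \le 2^{-n}$. Choosing for concreteness $K_n = n+2$, we get $\log(1/\ell_n) = \sum_{i=1}^n \log(i+2) = O(n \log n)$; for any $\beta > 1$ and small $|y|$, letting $n$ be maximal with $|y|\le\ell_n$,
\begin{equation*}
|\Gamma(x+y) - \Gamma(x)| \;\le\; 2^{-n} \;\le\; \frac{1}{|\log|y||^{\beta}},
\end{equation*}
since $|\log|y||^{\beta} \le (\log(1/\ell_{n+1}))^{\beta} = O((n\log n)^{\beta}) = o(2^n)$.

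By Remark \ref{Rem_logbeta}, $\Gamma$ satisfies the hypotheses of Lemma \ref{Lem_Calpha}, so $\mathcal{H}^+(d\Gamma) < +\infty$, and Lemma \ref{LemSchochet} then yields $\omega \in H^{-1}(\mathbb{R}^2)$. The main obstacle I anticipate is the inductive geometric bookkeeping in the modulus-of-continuity step, namely the $2^{-n}$ variation bound on each $n$-th level interval and the gap inequality $\ell_{n-1}(K_n-2)/K_n \ge \ell_n$; once these are in hand, everything else is a direct combination of Remark \ref{Rem_logbeta}, Lemmas \ref{Lem_Calpha} and \ref{LemSchochet}.
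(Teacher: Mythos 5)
Your proposal is correct and follows essentially the same route as the paper: a generalized Cantor set whose level-$n$ intervals shrink superexponentially (the paper takes $d_n=e^{-2^{n/\beta}}$, you take $\ell_n=2/(n+2)!$), the covering bound $2^n\ell_n^{\,s}\to 0$ for Hausdorff dimension zero, the logarithmic modulus bound $|\Gamma(x+y)-\Gamma(x)|\le |\log|y||^{-\beta}$, and the conclusion via Remark \ref{Rem_logbeta} together with Lemmas \ref{Lem_Calpha} and \ref{LemSchochet}. One cosmetic point: two points at distance $\le\ell_n$ need not lie in a common level-$n$ interval (one may sit in a removed gap), but since $\Gamma$ is constant on gaps the variation bound $2^{-n}$ holds anyway; the paper instead verifies the modulus by a chord/concavity argument calibrated so that $\Gamma_\infty(d_n)=|\log d_n|^{-\beta}$ exactly.
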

\begin{proof}[Sketch of the proof.]
We construct a general Cantor set $C_{\infty}$ by removing in step $n$ of the construction the central $1- 2c_n$ portion of every interval remaining from step $n-1$. 
We obtain
\begin{itemize}
\item $Z_{\infty}^0 = [0,1]$, 
\item $Z_{\infty}^1 = [0,c_1] \cup [1-c_1,1]$, 
\item $Z_{\infty}^2 = [0,c_1c_2] \cup [c_1 -c_1c_2,c_1] \cup [1-c_1, 1-c_1 + c_1c_2]  \cup [1-c_1c_2,1]$,
\item \dots
\end{itemize}
(note that $c_n\equiv 1 \slash 3$ would lead to the standard Cantor set).
Observe that the length of every of the $2^n$ intervals constituting $Z^n_{\infty}$ is equal 
$$d_n = {c_1c_2 \dots c_n}. $$ 
Fix $\beta>1$ and set $$d_n = e^{- 2^{n \slash \beta}}.$$ 
Then $c_n =  {d_{n}}\slash {d_{n-1}}$ is decreasing and tends to $0$ as $n \to \infty$.
Define $$C_\infty:= \bigcap_{n=0}^{\infty} Z^n_{\infty}.$$
Observe that  $Z_{\infty}^n$ is a union of $2^n$ intervals of length $d_n$ and hence $C_{\infty}$ can be covered by $2^n$ balls of diameter $d_n$ for $n=1,2,  \dots$. Since for every fixed $\varepsilon > 0$ we have $2^n (d_n)^{\varepsilon} \to 0$ as $n \to \infty$, we conclude that the Hausdorff dimension of $C_{\infty}$ is equal $0$.


Define
$$\omega_{\infty} := d\Gamma_{\infty} (x_1) \delta_0(dx_2),$$
where $\Gamma_{\infty}$ is the corresponding Cantor function constructed as in Definition \ref{DefCantor}. More precisely, let 
\begin{itemize}
\item $\gamma_{\infty}^0(x) = x$
\item $\gamma_{\infty}^n(x) = \begin{cases} 1\slash 2\gamma^{n-1}_{\infty}(x\slash c_n) &\mbox{ for } 0 \le x < c_n, \\
1 \slash 2 &\mbox { for } c_n \le x \le 1-c_n,\\
1\slash 2 +  1\slash 2\gamma^{n-1}_{\infty}((x - 1+ c_n) \slash c_n) &\mbox{ for } 1-c_n < x \le 1.
\end{cases}$
\end{itemize}
and define $\Gamma_\infty := \lim_{n\to \infty} \gamma_n$, prolonging it by $0$ for $x \le 0$ and $1$ for $x \ge 1$.
We claim that $$\Gamma_{\infty} (y) \le 1 \slash |\log(|y|)|^{\beta}$$ for $y \le \exp(- (\beta+1))$. Indeed,
\begin{itemize}
\item function $y \mapsto 1 \slash |\log(|y|)|^{\beta}$ is increasing on the interval $[0,1]$,
\item function $y \mapsto 1 \slash |\log(|y|)|^{\beta}$ is concave on the interval $[0,\exp(- (\beta+1))]$,
\item $\Gamma_{\infty} (d_n) =  2^{-n} = 1 \slash |\log(|d_n|)|^{\beta}$ for $n=0,1, \dots$,
\item the graph of $\Gamma_{\infty}$ restricted to $[d_{n+1},d_n]$ lies below the segment connecting points $(d_{n+1},\Gamma_{\infty}(d_{n+1}))$ and $(d_{n},\Gamma_{\infty}(d_{n}))$, i.e.
$$\Gamma_{\infty}(y) \le \Gamma_{\infty}(d_{n+1}) + \frac {y-d_{n+1}}{d_n - d_{n+1}}(\Gamma_{\infty } (d_n) - \Gamma_{\infty}(d_{n+1}))$$ for every $y \in [d_{n+1},d_n]$,
\item the segment connecting points $(d_{n+1},\Gamma_{\infty}(d_{n+1}))$ and $(d_{n},\Gamma_{\infty}(d_{n}))$ lies, for $n$ satisfying $d_n \le \exp(-(\beta+1))$, below the graph of $y \mapsto 1 \slash |\log(|y|)|^{\beta}$ due to concavity of the latter function.
\end{itemize}
Consequently, $\Gamma_{\infty} (y) \le 1 \slash |\log(|y|)|^{\beta}$ for $0 \le y \le \exp(- (\beta+1)).$ Self-similarity of $\Gamma_{\infty}$ allows us to conclude that  $$|\Gamma_{\infty} (x+ y) - \Gamma_{\infty}(x)| \le 1 \slash |\log(|y|)|^{\beta}$$ for $|y| \le \exp(- (\beta+1))$ and arbitrary $x \in \mathbb{R}$. 
Using Remark \ref{Rem_logbeta} and Lemma \ref{Lem_Calpha} we obtain  
$\mathcal{H}^+(d\Gamma_{\infty}) < + \infty$
and hence $\omega_{\infty} \in H^{-1}(\mathbb{R}^2)$.
\end{proof}
Finally, let us briefly comment on possible numerical applications of our results.
\begin{remark}
\rm 
From the point of view of proving the convergence of numerical schemes it is important to know that $\omega^n$, a sequence of approximations of a compactly supported measure $$\omega \in \mathcal{M}_+(\mathbb{R}^2) \cap H^{-1}(\mathbb{R}^2),$$ is such that $\mathcal{H}^+(\omega^n)$ remains bounded uniformly in $n$  (see e.g. \cite{S} or \cite{LopLowNusZhe}). Let, for instance, $\omega$ be the positive branch of the Kaden spiral (see \cite{CieSzu}) at some point in time. Then function $r \mapsto \omega(B(0,r))$ is H\"older continuous with exponent $\alpha = 1\slash 2$ (see \cite{CieSzu}) and hence belongs locally to $H^{-1}(\mathbb{R}^2)$. Let $\omega_n$ be a smooth approximation of $\omega$, e.g. a vortex blob approximation, see \cite{LopLowNusZhe}.  To prove that $\mathcal{H}^+(\omega^n)$ is bounded uniformly with respect to $n$ it suffices, by Remark \ref{Rem_UniformH}, to show that functions  $$r \mapsto \omega^n(B(0,r))$$ are uniformly H\"older continuous with constant $K$ and exponent $\alpha$ indepenent of $n$. Whether this is the case, depends on a particular form of vortex blob approximation. The goal is then to construct an approximation which satisfies the uniform H\"older condition. This, however, is relatively simple, since $r \mapsto \omega(B(0,r))$ is H\"older continuous.
\end{remark}
{\bf Acknowledgements.} I am grateful to Tomasz Cie\'slak from the Institute of Mathematics, Polish Academy of Sciences in Warsaw for reading the manuscript and valuable comments regarding it. I also acknowlegde his drawing my attention to numerical applications of the obtained results. Furthermore, I am grateful to Marcin Ma\l{}ogrosz from the Instutite of Applied Mathematics and Mechanics, University of Warsaw for a useful discussion concerning Remark \ref{RemH-12}. 


\end{document}